\documentclass{amsart}
\usepackage{mathptmx, amscd, amssymb, colonequals, url, tikz, bm, enumerate, verbatim, tikz-cd}
\makeatletter
\@namedef{subjclassname@2010}{%
  \textup{2010} Mathematics Subject Classification}
\makeatother
\usepackage[colorlinks,citecolor=red,urlcolor=blue,bookmarks=false,hypertexnames=true]{hyperref}
%\DeclareMathAlphabet{\mathcal}{OMS}{cmsy}{m}{n}

\newtheorem{theorem}{Theorem}[section]
\newtheorem{lemma}[theorem]{Lemma}

\newtheorem{proposition}[theorem]{Proposition}

\theoremstyle{definition}
\newtheorem{definition}[theorem]{Definition}
\newtheorem{example}[theorem]{Example}
\newtheorem{remark}[theorem]{Remark}
\numberwithin{equation}{theorem}

\def\mapsto{\longmapsto}

\newcommand{\RN}[1]{%
  \textup{\uppercase\expandafter{\romannumeral#1}}%
}

\minCDarrowwidth22pt

\begin{document}
\title [Cohomological dimension of ideals defining Veronese subrings]{Cohomological dimension of ideals \\ defining Veronese subrings}

\author[Vaibhav Pandey]{Vaibhav Pandey}
\date{\today}
\address{Department of Mathematics, University of Utah, 155 S 1400 E, Salt Lake City,\newline UT~84112, USA}
\email{pandey@math.utah.edu}

\subjclass[2010]{13D45 (primary);  13D05, 14B15 (secondary)}

\keywords{cohomological dimension, local cohomology}

\begin{abstract}
Given a standard graded polynomial ring over a commutative Noetherian ring $A$, we prove that the cohomological dimension and the height of the ideals defining any of its Veronese subrings are equal. This result is due to Ogus when $A$ is a field of characteristic zero, and follows from a result of Peskine and Szpiro when $A$ is a field of positive characteristic; our result applies, for example, when $A$ is the ring of integers.
\end{abstract}
\maketitle

\section{Introduction}

Throughout this paper, all rings are assumed to be commutative, Noetherian, and with an identity element.

Let $T = \mathbb{Z} [x_1,x_2,\ldots,x_k]$ be the standard graded polynomial ring in $n$ indeterminants over the integers. Consider a minimal minimal presentation of its $n$-th Veronese subring $T^{(n)} = \oplus _{i \geq 0} T_{in}$ as $T^{(n)} \cong \mathbb{Z} [t_1, \ldots, t_d]/I$. We say that $I$ is the ideal defining the $n$-th Veronese subring of $T$. For $A$ a ring, we set $T_A = T\otimes _{\mathbb{Z}} A$.

Ogus in \cite[Example 4.6]{ogus} proved that when $A$ is a field of characteristic zero, the cohomological dimension of $I$ is the same as its height. The same result also follows when $A$ is a field of positive characteristic by a result of Peskine and Szpiro \cite[Proposition \RN{3}.4.1]{PS}. We prove that this continues to hold for any commutative Noetherian ring $A$. The critical step is the calculation of local cohomology of the polynomial ring $\mathbb{Z}[t_1, \ldots t_d]$ supported at the ideal $I$. More precisely, we prove:

\begin{theorem} \label{main}
Let $T = \mathbb{Z}[x_1, \ldots, x_k]$  be a polynomial ring with the $\mathbb{N}$-grading $[T]_0 = \mathbb{Z}$ and deg $x_i = 1$ for each $i$. Consider a minimal presentation of $T^{(n)}$ as $R/I$. Then \[H^i_I(R) = 0  \quad \text{for } i \neq \mathrm{height}\ I. \]
\end{theorem}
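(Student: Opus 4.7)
The plan is to combine Ogus's characteristic-zero vanishing with Peskine--Szpiro's characteristic-$p$ vanishing by an arithmetic descent over $\mathbb{Z}$. Set $h=\mathrm{height}\,I$. Since the Veronese subring $T^{(n)}$ is Cohen--Macaulay in every characteristic, the reduction $R_{\mathbb{F}_p}/I_{\mathbb{F}_p}\cong T^{(n)}_{\mathbb{F}_p}$ is Cohen--Macaulay for every prime $p$, and Peskine--Szpiro gives $H^i_{I_{\mathbb{F}_p}}(R_{\mathbb{F}_p})=0$ for $i\neq h$. Likewise Ogus gives $H^i_{I_{\mathbb{Q}}}(R_{\mathbb{Q}})=0$ for $i\neq h$, and flat base change along $\mathbb{Z}\to\mathbb{Q}$ rewrites this as $H^i_I(R)\otimes_{\mathbb{Z}}\mathbb{Q}=0$ for $i\neq h$.

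The descent step uses the long exact sequence
\[ \cdots \to H^{i-1}_I(R/pR) \to H^i_I(R) \xrightarrow{\cdot p} H^i_I(R) \to H^i_I(R/pR) \to \cdots \]
arising from $0\to R\xrightarrow{\cdot p}R\to R/pR\to 0$, together with the identification $H^i_I(R/pR)\cong H^i_{I_{\mathbb{F}_p}}(R_{\mathbb{F}_p})$. For $i\notin\{h,h+1\}$ both flanking terms vanish, so $\cdot p$ is an isomorphism on $H^i_I(R)$ for every prime $p$. This forces $H^i_I(R)$ to be a $\mathbb{Q}$-vector space, and combined with the vanishing of $H^i_I(R)\otimes_{\mathbb{Z}}\mathbb{Q}$ one deduces $H^i_I(R)=0$ for $i\notin\{h,h+1\}$.

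I expect the main obstacle to be the borderline case $i=h+1$. There the long exact sequence only yields surjectivity of $\cdot p$ on $H^{h+1}_I(R)$, and a torsion $\mathbb{Z}$-module can be $p$-divisible for every $p$ without being zero (witness $\mathbb{Q}/\mathbb{Z}$). The natural way forward is to show that for every $p$ the reduction map $H^h_I(R)\to H^h_{I_{\mathbb{F}_p}}(R_{\mathbb{F}_p})$ is surjective; by the long exact sequence this forces $\cdot p$ to be injective, hence bijective, on $H^{h+1}_I(R)$, and then $H^{h+1}_I(R)\otimes_{\mathbb{Z}}\mathbb{Q}=0$ yields $H^{h+1}_I(R)=0$. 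I would attempt such a surjectivity by exploiting the explicit description of the top local cohomology $H^h_I(R)$ of the Veronese subring through its canonical module, which is classical, effectively lifting a generating set in characteristic $p$ back to characteristic zero. As a fallback, one may use the graded structure of $H^{h+1}_I(R)$ and try to argue that each graded piece is a finitely generated $\mathbb{Z}$-module, in which case being torsion and $p$-divisible for every $p$ immediately forces vanishing.
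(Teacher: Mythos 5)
Your outline of the peripheral steps matches the paper: Peskine--Szpiro kills $H^i_I(R/pR)$ for $i\neq h$, the long exact sequence from $0\to R\xrightarrow{\cdot p}R\to R/pR\to 0$ makes $\cdot p$ an isomorphism on $H^i_I(R)$ for $i>h+1$ and a surjection for $i=h+1$, and Ogus gives $H^i_I(R)\otimes_{\mathbb{Z}}\mathbb{Q}=0$ for $i>h$, so once $\cdot p$ is known to be injective on $H^{h+1}_I(R)$ for every prime $p$, the $\mathbb{Z}$-torsion submodule is zero and everything vanishes. You also correctly isolate the real difficulty: the only thing that does not come for free is the injectivity of $\cdot p$ on $H^{h+1}_I(R)$ (equivalently, surjectivity of $H^h_I(R)\to H^h_I(R/pR)$). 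That is exactly the step the paper calls ``the crucial part.''

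Where the proposal falls short is in how it would establish that injectivity. The first suggestion, describing $H^h_I(R)$ ``through the canonical module of the Veronese subring,'' conflates two different objects: the canonical module of $R/I$ arises from $\operatorname{Ext}^h_R(R/I,R)$ and controls $H^{\dim}_{\mathfrak m}(R/I)$, whereas $H^h_I(R)$ is the direct limit $\varinjlim\operatorname{Ext}^h_R(R/I^t,R)$, and since $I$ is far from a complete intersection globally there is no clean presentation of $H^h_I(R)$ from which to lift generators across $\mathbb{Z}\to\mathbb{F}_p$. The second suggestion, that each graded piece of $H^{h+1}_I(R)$ is a finitely generated $\mathbb{Z}$-module (so that torsion plus $p$-divisible forces zero), is not something you can assume: graded components of local cohomology with support in a non-maximal ideal are typically not finitely generated, and proving such finiteness here would likely require the vanishing you are trying to establish in the first place.

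The paper closes this gap by an entirely different mechanism. Proposition~\ref{ci} shows that after inverting any variable $t_i$ mapping to a pure power $x_j^n$, the ideal $IR_{t_i}$ becomes a complete intersection of height $h$; hence $H^{h+1}_I(R)_{t_i}=H^{h+1}_{IR_{t_i}}(R_{t_i})=0$, and in particular $\cdot p$ is (vacuously) injective on each such localization. Lemma~\ref{lc-inj}, a strengthening of \cite[Corollary 2.18]{LSW} proved via the Eulerian graded $\mathcal{D}$-module and $F$-module theory of \cite{LSW}, then upgrades this local injectivity to injectivity on $H^{h+1}_I(R)$ itself, using that $R/(I+pR)$ is a direct summand of $T/pT$ to verify the required nilpotence of Frobenius on $[H^{n-h}_{(t_1,\dots,t_k)}(R/(I+pR))]_0$. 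This localization-plus-$F$-module argument is the essential content missing from your proposal.
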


Towards the above result, we establish a condition for the injectivity of multiplication by a prime integer on local cohomology modules over the ring $\mathbb{Z}[t_1, \ldots t_d]$ in Lemma~\ref{lc-inj}. This strengthens \cite[Corollary 2.18]{LSW} and is a result of independent interest.

It is worth mentioning that in the above context, the arithmetic rank may vary with the characteristic of the ring $A$:

\begin{example}
Let $k[x_1, \ldots, x_n]$ be a standard graded polynomial ring over a field $k$. Let $R$ be a polynomial ring over $k$ in indeterminants that map entrywise to the distinct elements of the matrix
\begin{center}
\ensuremath{\begin{pmatrix}
x_1^2 & x_1x_2 & \cdots & x_1x_n \\
x_1x_2 & x_2^2 & \cdots & x_2x_n \\
\vdots  & \vdots  & \ddots & \vdots  \\
x_1x_n & x_2x_n & \cdots & x_n^2
\end{pmatrix}} .
\end{center}
Thus, $R$ is a polynomial ring in $n+1 \choose 2$ indeterminants. The relations between the generators of $R$ under the above map are precisely those corresponding to the size two minors of this matrix. These relations define an ideal $I$ of $R$, with $R/I$ being a minimal presentation. Barile proved that the arithmetic rank of $I$, i.e., the minimum number of equations defining the affine variety $V(I)$ set-theoretically, is

\begin{center}

\ensuremath{
\displaystyle{\mathrm{ara} \ I} =
\begin{cases}
  {n \choose 2} &
  \text{if char } $k = 2$, \\ {n+1 \choose 2} - 2 & \text{otherwise.}
\end{cases}}

\end{center}

 More generally, Barile computed the arithmetic rank of the class of ideals generated by the size $t$ minors of a symmetric $n \times n$ matrix of indeterminants over a field in \cite[Theorems 3.1, 5.1]{Ba} and remarked: \textit{This seems to be the first class of ideals defined over $\mathbb{Z}$ for which, after specialization to a field $k$, the arithmetical rank depends on $k$.} This dependence of the arithmetic rank of $I$ on the characteristic of the field makes it interesting to investigate the local cohomology of polynomial rings over the integers such as those examined here.

\end{example}

\section{Injectivity of multiplication by a prime integer\\ on local cohomology modules}

The following lemma gives a criterion for integer torsion in local cohomology modules of a standard graded polynomial ring over the integers:

\begin{lemma} \cite[Corollary 2.18]{LSW}
Let $R = \mathbb{Z}[x_1, \ldots, x_n]$ be a polynomial ring with the $\mathbb{N}$-grading $[R]_0 = \mathbb{Z}$ and deg $x_i = 1$ for each $i$. Let $I$ be a homogeneous ideal, $p$ a prime integer, and $h$ a nonnegative integer. Suppose that the Frobenius action on \[[H^{n-h} _{(x_1, \ldots, x_n)} (R/(I+pR))]_0\] is nilpotent, and that the multiplication by $p$ map \[H^{h+1}_I (R)_{x_i} \overset{.p} \rightarrow H^{h+1}_I (R)_{x_i}\] is injective for each $i$. Then the multiplication by $p$ map on $H^{h+1}_I (R)$ is injective.
\end{lemma}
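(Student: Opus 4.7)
The natural starting point is to apply $H^\bullet_I(-)$ to the short exact sequence $0 \to R \xrightarrow{\cdot p} R \to R/pR \to 0$, producing the long exact sequence whose exactness identifies $\ker\bigl(H^{h+1}_I(R) \xrightarrow{\cdot p} H^{h+1}_I(R)\bigr)$ with the image $N := \operatorname{image}(\delta)$ of the connecting homomorphism $\delta \colon H^h_I(R/pR) \to H^{h+1}_I(R)$. The lemma is therefore equivalent to showing $N = 0$.

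Hypothesis~(2) directly controls the support of $N$. Localizing $\delta$ at each $x_i$, we have $N_{x_i} \subseteq \ker\bigl(H^{h+1}_I(R)_{x_i} \xrightarrow{\cdot p} H^{h+1}_I(R)_{x_i}\bigr) = 0$, so $N$ is $(x_i)$-torsion for every $i$, hence $\frakm$-power torsion, where $\frakm := (x_1, \ldots, x_n)$. Being also $p$-torsion, $N$ is a $\ZZ$-graded module over $S := R/pR = \FF_p[x_1, \ldots, x_n]$ supported at the graded maximal ideal; if nonzero, $N$ has a nonzero graded socle.

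The next step is to deploy hypothesis~(1) via a Frobenius and duality argument. Over the Gorenstein ring $S$, graded local duality yields
\[
\bigl[H^{n-h}_{\frakm S}(S/IS)\bigr]_0 \;\cong\; \operatorname{Hom}_{\FF_p}\!\bigl([\operatorname{Ext}^h_S(S/IS,\,S)]_{-n},\,\FF_p\bigr),
\]
and the Frobenius on the left is dual to a Frobenius on the right. Via the direct-limit presentation $H^h_I(R/pR) = \varinjlim_t \operatorname{Ext}^h_S(S/(IS)^t, S)$ and the Frobenius-compatibility of the direct system, this transfers nilpotence of Frobenius from hypothesis~(1) to a graded piece of $H^h_I(R/pR)$, and then via the surjection $\delta$ onto $N$ to the corresponding piece of $N$. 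A Hartshorne--Speiser--Lyubeznik argument for $\frakm$-torsion, $p$-torsion graded $F$-modules, combined with the fact that the Frobenius on $S$ scales degrees by $p$, then extends the vanishing to every graded piece of $N$, yielding $N = 0$.

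The main obstacle is the precise Frobenius bookkeeping: the connecting map $\delta$ is $R$-linear but is computed \v{C}ech-theoretically by lifting mod-$p$ cocycles to $R$ and applying $d/p$, so its interaction with $p$-th powers of cochains (which define the Frobenius) is delicate. Identifying the Frobenius action inherited by $N$ with the dual of the Frobenius appearing in hypothesis~(1) requires a careful diagram chase through the long exact sequence, the direct limit of Ext modules, and graded local duality. This compatibility is the technical heart of the proof; once it is established, the vanishing argument above delivers $N = 0$, completing the proof.
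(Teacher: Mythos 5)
Your opening moves are exactly right and match the paper's: the long exact sequence identifies $\ker\bigl(H^{h+1}_I(R)\xrightarrow{\cdot p} H^{h+1}_I(R)\bigr)$ with $N=\operatorname{im}(\delta)$, and hypothesis~(2) forces $\operatorname{Supp}(N)\subseteq\{\frakm\}$. The gap is in the step where you bring hypothesis~(1) to bear, and it is a genuine one rather than a matter of polish. Your plan is to endow $N$ with a Frobenius action inherited via $\delta$ from $H^h_I(R/pR)$, then use local duality plus a Hartshorne--Speiser--Lyubeznik vanishing. But $\delta$ is only $R$-linear; the short exact sequence $0\to R\xrightarrow{\cdot p} R\to R/pR\to 0$ is not a sequence of $\mathcal{F}$-modules, and there is no natural Frobenius structure on $H^{h+1}_I(R)$ or on its $p$-torsion for $\delta$ to be compatible with. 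You flag this yourself (``the technical heart of the proof; once it is established\ldots'') and then stop; that compatibility is precisely what is missing, and the route you sketch (tracking $p$-th powers through the \v{C}ech-theoretic $\delta$) is not known to close.

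The paper sidesteps this entirely by switching categories. The kernel $K=N$ is a $\mathcal{D}_\ZZ(R)$-module, and since it is killed by $p$ it is a module over $\mathcal{D}_\ZZ(R)/p\mathcal{D}_\ZZ(R)\cong\mathcal{D}_{\FF_p}(R/pR)$; moreover $\delta$ \emph{is} a morphism of Eulerian graded $\mathcal{D}$-modules, so $N$ is a $\mathcal{D}$-module quotient of $H^h_I(R/pR)$ supported at $\{\frakm\}$. One then invokes the LSW equivalence (Theorem~2.2 in the paper, conditions (1)$\Leftrightarrow$(4)): nilpotence of Frobenius on $[H^{n-h}_{\frakm}(R/(I+pR))]_0$ is equivalent to $H^h_I(R/pR)$ having no Eulerian $\mathcal{D}$-module composition factor supported at $\{\frakm\}$, which rules out a nonzero $N$. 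In short, the Frobenius hypothesis is converted into a statement about $\mathcal{D}$-module composition factors \emph{before} it is compared with $N$, precisely because $N$ carries a natural $\mathcal{D}$-structure but no natural $\mathcal{F}$-structure. To repair your argument you would either need to prove the $\mathcal{F}/\mathcal{D}$ transfer yourself (essentially re-proving LSW Theorem~2.16) or cite it; as written, the proof is incomplete at its decisive step.
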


The proof of this lemma largely relies on the following theorem. For an overview of $\mathcal{D}$-modules and $\mathcal{F}$-modules, we refer the reader to \cite{LSW}.

\begin{theorem} \cite[Theorem 2.16]{LSW}
Let $R$ be a standard graded polynomial ring, where $[R]_0$ is a field of prime characteristic. Let $\mathbf{m}$ be the homogeneous maximal ideal of $R$, and $I$ an arbitrary homogeneous ideal. For each nonnegative integer $k$, the following are equivalent:
\begin{enumerate}
    \item Among the composition factors of the Eulerian $\mathcal{D}$-module $\xi (H^k _I (R))$, there is at least one composition factor with support $\{\mathbf{m}\}$.
    \item Among the composition factors of the graded $\mathcal{F}$-finite module $H^k_I(R)$, there is at least one composition factor with support $\{\mathbf{m}\}$.
    \item $H^k_I(R)$ has a graded $\mathcal{F}$-module homomorphic image with support $\{\mathbf{m}\}$.
    \item The natural Frobenius action on $[H^{dimR - k}_{\mathbf{m}} (R/I)]_0$ is not nilpotent.
\end{enumerate}
\end{theorem}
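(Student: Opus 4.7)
The plan is to prove the four equivalences by combining the correspondence between graded $\mathcal{F}$-finite modules and Eulerian $\mathcal{D}$-modules with graded Matlis duality at the homogeneous maximal ideal $\mathbf{m}$.

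First, for $(1) \Leftrightarrow (2)$, I would invoke that the functor $\xi$ is an equivalence of categories between graded $\mathcal{F}$-finite modules and Eulerian $\mathcal{D}$-modules that is exact and preserves the support of each object. Consequently it sends composition series to composition series and preserves the support of every composition factor, so the two conditions on composition factors of $H^k_I(R)$ and of $\xi(H^k_I(R))$ are literally the same.

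Next, for $(2) \Leftrightarrow (3)$, the direction $(3) \Rightarrow (2)$ is immediate, since any composition factor of a homomorphic image is a composition factor of the original module. The substantive direction is $(2) \Rightarrow (3)$. Here I would use the classification of graded $\mathcal{F}$-finite modules with support $\{\mathbf{m}\}$: they are essentially determined by a finite-dimensional graded vector space with a compatible Frobenius action, and they admit no nonzero $\mathcal{F}$-module extensions by modules of strictly larger support. Using this, in a composition series any factor with support $\{\mathbf{m}\}$ can be pushed to the top, producing a nonzero quotient with support $\{\mathbf{m}\}$.

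Finally, for $(3) \Leftrightarrow (4)$, apply graded Matlis duality together with graded local duality over the standard graded polynomial ring $R$. An $\mathcal{F}$-module quotient of $H^k_I(R)$ with support $\{\mathbf{m}\}$ is, after Matlis dualizing, a Frobenius-stable submodule of a finite-dimensional piece of the Matlis dual of $H^k_I(R)$; local duality identifies this dual, in the relevant internal degrees, with a piece of $H^{\dim R - k}_{\mathbf{m}}(R/I)$, and in degree zero the two Frobenius actions match. Hence existence of such a nonzero quotient is equivalent to existence of a nonzero non-nilpotent Frobenius-stable subspace of $[H^{\dim R - k}_{\mathbf{m}}(R/I)]_0$, which by finite-dimensionality is equivalent to the Frobenius on this degree-zero piece being non-nilpotent.

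The main obstacle is the step $(2) \Rightarrow (3)$: it is where one actually needs the full structural input about graded $\mathcal{F}$-finite modules whose support is a single closed point, in particular the vanishing of $\mathcal{F}$-module $\mathrm{Ext}^1$ from modules of larger support into them. Once this splitting-off is available, together with the dictionary between $\xi$ and composition factors and the standard Matlis/local-duality package, the remaining equivalences are essentially formal.
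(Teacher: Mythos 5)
This theorem is not proved in the paper at all: it is imported verbatim from Lyubeznik, Singh, and Walther, cited as \cite[Theorem 2.16]{LSW}, and is used as a black box in the proof of Lemma~\ref{lc-inj}. There is therefore no ``paper's own proof'' against which to compare your attempt; the only way to genuinely check it would be against the argument in \cite{LSW} itself.

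Taken on its own terms, your sketch assembles plausible ingredients (the functor $\xi$, the finite-length structure of graded $\mathcal{F}$-finite modules, Matlis and local duality), but two of the load-bearing claims are stated without justification and one is overstated. First, the assertion that $\xi$ is an exact, support-preserving \emph{equivalence} of categories between graded $\mathcal{F}$-finite modules and Eulerian graded $\mathcal{D}$-modules cannot be quite right as stated: if it were, conditions $(1)$ and $(2)$ would be tautologically the same and there would be no content in listing both. In reality a simple $\mathcal{F}$-module need not remain simple after applying $\xi$, and matching up the two kinds of composition factors (and their supports) is precisely one of the nontrivial points the theorem settles. Second, the step $(2)\Rightarrow(3)$ is reduced to a claimed vanishing of $\mathcal{F}$-module $\mathrm{Ext}^1$ from modules of strictly larger support into modules supported at $\{\mathbf{m}\}$; you flag this yourself as ``the main obstacle,'' but no argument is given, and it is exactly the kind of structural input one would need to extract from the $\mathcal{F}$-module theory (e.g.\ an injectivity property of graded $\mathcal{F}$-modules with support $\{\mathbf{m}\}$) rather than assume. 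Similarly, the duality step $(3)\Leftrightarrow(4)$ requires a careful identification of the Frobenius actions on both sides under graded local duality, which is asserted but not carried out. So the outline is a reasonable roadmap, but it is not a proof, and the places it waves at are precisely the places where the content of \cite{LSW} lives.
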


We strengthen Lemma $2.1$ as follows:
\begin{lemma} \label{lc-inj}
Let $R = \mathbb{Z}[x_1, \ldots, x_n]$ be a polynomial ring with the $\mathbb{N}$-grading $[R]_0 = \mathbb{Z}$ and deg $x_i = 1$ for each $i$. Let $I$ be a homogeneous ideal, $p$ a prime integer, and $h$ a nonnegative integer. Let $t_1, \ldots, t_k$ be homogeneous elements in $R$ such that \[\sqrt{(t_1, \ldots, t_k)}R/I = (x_1, \ldots, x_n)R/I.\] Further, suppose that the Frobenius action on \[[H^{n-h} _{(t_1, \ldots, t_k)} (R/(I+pR))]_0\] is nilpotent and that the multiplication by $p$ map \[H^{h+1}_I (R)_{t_i} \overset{.p} \rightarrow H^{h+1}_I (R)_{t_i}\] is injective for each $i$. Then the multiplication by $p$ map on $H^{h+1}_I (R)$ is injective.
\end{lemma}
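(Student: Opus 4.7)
I follow the proof of Lemma~2.1 (\cite[Corollary~2.18]{LSW}), using the radical hypothesis to swap the coordinates $x_i$ for the $t_j$ in both the support analysis of $\ker(\cdot p)$ and the Frobenius-nilpotence input to Theorem~2.2.

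Set $K := \ker\bigl(H^{h+1}_I(R) \xrightarrow{\cdot p} H^{h+1}_I(R)\bigr)$; the goal is to prove $K=0$. From the short exact sequence $0 \to R \xrightarrow{\cdot p} R \to R/pR \to 0$ and the associated long exact sequence of local cohomology at $I$, one obtains a surjection $H^h_I(R/pR) \twoheadrightarrow K$, which in the graded $\mathcal{F}$-module framework of LSW applied to the polynomial ring $R/pR \cong \mathbb{F}_p[x_1,\ldots,x_n]$ is a morphism of graded $\mathcal{F}$-finite modules. This step, together with its $\mathcal{F}$-compatibility, is the one ingredient taken wholesale from the proof of Lemma~2.1.

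Two new observations then complete the argument. First, the hypothesis on Frobenius nilpotence is equivalent to its $(x_1,\ldots,x_n)$-analogue: since local cohomology depends only on the radical of the supporting ideal and the given equality $\sqrt{(t_1,\ldots,t_k) + I} = (x_1,\ldots,x_n) + I$ in $R$ persists after further adjoining $pR$, we have $H^{n-h}_{(t_1,\ldots,t_k)}(R/(I+pR)) = H^{n-h}_{(x_1,\ldots,x_n)}(R/(I+pR))$ with matching Frobenius actions. Second, letting $\mathbf{m}$ denote the homogeneous maximal ideal of $R/pR$, I claim $\operatorname{Supp}(K) \subseteq \{\mathbf{m}\}$: as a submodule of $H^{h+1}_I(R)$, the module $K$ is $I$-torsion; by construction it is $p$-torsion; and the injectivity hypothesis forces $K_{t_j} = 0$ for each $j$, so $\operatorname{Supp}(K) \subseteq V\bigl(pR + I + (t_1,\ldots,t_k)\bigr)$, whose radical contains $(p,x_1,\ldots,x_n)$ and hence equals the maximal ideal corresponding to $\mathbf{m}$.

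Combining these, if $K \neq 0$ then $H^h_I(R/pR)$ has a nonzero graded $\mathcal{F}$-module homomorphic image supported at $\{\mathbf{m}\}$. By clauses~(3) and (4) of Theorem~2.2, this forces the Frobenius action on $[H^{n-h}_{\mathbf{m}}(R/(I+pR))]_0$ to be non-nilpotent, contradicting the translated form of the nilpotence hypothesis; hence $K=0$, as desired. The main subtle point one must handle is the $\mathcal{F}$-compatibility of the connecting surjection $H^h_I(R/pR) \twoheadrightarrow K$, but this is already present in the LSW machinery and poses no new obstacle here; the genuinely new content of Lemma~\ref{lc-inj} lies in the two observations above, both of which trade on the radical hypothesis.
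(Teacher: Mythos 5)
Your proof is correct and follows essentially the same route as the paper's: translate the Frobenius-nilpotence hypothesis from the $t_j$ to the $x_i$ via the radical hypothesis, localize the kernel $K$ to show $\operatorname{Supp}(K) \subseteq \{\mathbf{m}\}$, then invoke Theorem~2.2 to derive a contradiction from $K\ne 0$. The one difference is cosmetic but worth noting: you run the final step through the graded $\mathcal{F}$-module clauses (3)--(4) of Theorem~2.2, which obliges you to carry along the $\mathcal{F}$-compatibility of the connecting surjection (the subtlety you flag at the end), whereas the paper instead observes that $K$ is a $\mathcal{D}_{\mathbb{Z}}(R)$-module killed by $p$, hence a $\mathcal{D}_{\mathbb{F}_p}(R/pR)$-module, and that the connecting map is automatically a morphism of Eulerian graded $\mathcal{D}$-modules; this lets it apply the $\mathcal{D}$-module clause of Theorem~2.2 and sidestep the $\mathcal{F}$-structure question entirely.
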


\begin{proof}
Since local cohomology modules depend only on the radical of the ideal defining the support, \[H^{n-h} _{(t_1, \ldots, t_k)} (R/(I+pR)) = H^{n-h} _{(x_1, \ldots, x_n)} (R/(I+pR)).\] Therefore, the natural Frobenius action on $[H^{n-h} _{(x_1, \ldots, x_n)} (R/(I+pR))]_0$ is nilpotent. The short exact sequence \[0 \rightarrow R \overset{.p} \rightarrow R \rightarrow R/pR \rightarrow 0\] induces the following long exact sequence of local cohomology modules: \[\cdots \rightarrow H^i_I(R) \rightarrow H^i_I(R/pR) \overset{\delta} \rightarrow H^{i+1} _I (R) \overset{.p} \rightarrow H^{i+1} _I (R) \rightarrow \cdots .\] Let $K$ denote the kernel of the multiplication by $p$ map in the above display, and $\mathbf{m}$ denote the homogeneous maximal ideal of $R/pR$.

By hypothesis, the localization $K_{t_i}$ is zero for each $i$. Thus, any prime ideal in the support of $K$ must contain each $t_i$. We may assume that $I$ is a proper ideal of $R$. Thus, prime ideals $\mathbf{p}$ in the support of $K$ are such that \[(t_1, \ldots, t_k)R \subseteq \mathbf{p} \text{  and  } I \subseteq \mathbf{p}.\] Therefore, $\sqrt{(t_1, \ldots, t_k)R + I} = \mathbf{m}$ is contained in $\mathbf{p}$. Thus, Supp($K$) is contained in $\{\mathbf{m}\}$.

The kernel $K$ is a $\mathcal{D}_{\mathbb{Z}}(R)$-module; since it is annihilated by $p$, it is also a module over \[\mathcal{D}_{\mathbb{Z}}(R)/p\mathcal{D}_{\mathbb{Z}}(R) \cong \mathcal{D}_{\mathbb{F}_p}(R/pR).\] This isomorphism follows, for example, from \cite[Lemma 2.1]{bblsz}. If $K$ is nonzero, then it is a homomorphic image of $H^i_I (R/pR)$ in the category of Eulerian graded $\mathcal{D}_{\mathbb{F}_p} (R/pR)$-modules, supported precisely at the homogeneous maximal ideal $\mathbf{m}$ of $R/pR$. But this is not possible, since the $\mathcal{D}_{\mathbb{F}_p} (R/pR)$-module $H^i_I (R/pR)$ has no composition factor with support $\{\mathbf{m}\}$ by Theorem $2.2$.
\end{proof}

We illustrate Lemma $2.3$ with the following example, but first a definition:

\begin{definition}
Let $I$ be an ideal of a ring $R$. For each $R$-module $M$, set \[\mathrm{cd}_R (I,M) = \sup \{n \in \mathbb{N} : H^n_I (M) \neq 0\}.\]

The \textit{cohomological dimension} of $I$ is \[\mathrm{cd}(I) = \sup \{\mathrm{cd}_R(I,M) : \text{$M$ is an $R$-module}\}.\]

By the right exactness of the functor $H^{\mathrm{cd}(I)} _I(-)$, we get $\mathrm{cd}_R (I) = \mathrm{cd}_R (I,R)$.
\end{definition}

\begin{example}
Consider the ring $T = \mathbb{Z}[x^4, x^3y,xy^3,y^4]$, which has a minimal presentation: \[T \cong \mathbb{Z}[t_1, t_2, t_3, t_4]/ (t_1t_4 - t_2t_3 \ , t_2t_4^2-t_3^3\ , t_1t_3^2-t_2^2t_4 \ , t_1^2t_3-t_2^3) = R/I.\]
We calculate the cohomological dimension of the ideal $I$. For any field $k$, we denote $T\otimes _{\mathbb{Z}} k$ by $T_k$. Hartshorne in \cite[Theorem]{hartshorne} showed that for $k$, a field of positive characteristic, the arithmetic rank of $IR_k$ is two. Since the ideal $I$ has height two, it follows that the cohomological dimension of $IR_k$ is also two.

We denote by $T'_k$ the ring $k[x^4, x^3y,x^2y^2,xy^3,y^4]$, which is the normalization of $T_k$ . The short exact sequence of $T_k$-modules \[0 \rightarrow T_k \rightarrow T'_k \rightarrow T'_k/T_k \rightarrow 0\] induces an isomorphism of local cohomology modules \[H^2 _{(x^4,x^3y,xy^3,y^4)} (T_k) \cong H^2 _{(x^4,x^3y,xy^3,y^4)} (T'_k),\] since $T'_k/T_k$ is a zero-dimensional $T_k$-module. As $T'_k$ is a direct summand of the polynomial ring $k[x,y]$, it follows that $[H^{2} _{(t_1, t_2,t_3,t_4)} (R/(I+pR))]_0 = 0$.

Note that $\sqrt{(t_1, t_4)}R/I = (x^4,x^3y,xy^3,y^4)R/I$. Further, \[IR_{t_1} = (t_3 - t_2^3/t_1^2 \ , t_4 - t_2^4/t_1^3 ) \text{ and } IR_{t_4} = (t_1 - t_3^4/t_4^3 \ , t_2 - t_3^3/t_4^2) \] are both two generated ideals. Thus, by Lemma~\ref{lc-inj}, the map $H^3_I(R) \overset{.p} \rightarrow H^3_I(R)$ is injective for each nonzero prime integer $p$. The exact sequence of local cohomology modules induced by \[0 \rightarrow R \overset{.p} \rightarrow R \rightarrow R/pR \rightarrow 0\] shows that $H^3_I(R) \overset{.p} \rightarrow H^3_I(R)$ is surjective since $H^3_I (R/pR) = 0$. Therefore, $H^3_I(R)$ is a $\mathbb{Q}$-vector space. But the cohomological dimension of $IR_{\mathbb{Q}}$ is known to be two. We conclude that the cohomological dimension of $I$ is two. It is worth noting that $T/pT \cong R/(I+pR)$ is not $F$-pure, since, \[(x^3y)^2 \notin (x^4)T/pT \text{ but } (x^3y)^{2p} \in (x^{4p})T/pT.\]
\end{example}

\section{Calculation of cohomological dimension}

\begin{definition}
Let $R = \oplus _{i \geq 0}  R_i$ be a graded ring, and $n$ be a positive integer. We denote by $R^{(n)}$, the \emph{Veronese subring} of $R$ spanned by elements which have degree a multiple of $n$, i.e., $R^{(n)} = \oplus _{i \geq 0} R_{in}$.
\end{definition}

We now present the key result which helps us calculate the cohomological dimension of ideals defining Veronese subrings.

\begin{proposition} \label{ci}
Let $A$ be a domain. Let $T = A[x_1, \ldots, x_k]$ be a polynomial ring with the $\mathbb{N}$-grading $[T]_0 = A$ and deg $x_i = 1$ for each $i$. Consider the lexicographic ordering of monomials in $T$ induced by $x_1 > x_2 > \cdots >x_k$.

Write a minimal presentation of $T^{(n)}$ as $R/I$ where $R = A[t_1, \ldots, t_d]$ with $t_i$ mapping to the $i$-th degree $n$ monomial under the above monomial ordering. Then, for each $i$ such that $t_i \mapsto x_j^n$ for some $j$, the ideal $IR_{t_i}$ is generated by a regular sequence of length  $\mathrm{height}\ I$.
\end{proposition}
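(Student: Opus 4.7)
The plan is to make everything explicit after localizing at $t_i \mapsto x_j^n$. First I would pin down the target ring $T^{(n)}_{x_j^n}$. For each $\ell \neq j$, the degree-$n$ monomial $x_j^{n-1}x_\ell$ is one of the generators of $T^{(n)}$, so some $t_{i_\ell}$ maps to it; hence $y_\ell := t_{i_\ell}/t_i$ maps to $x_\ell/x_j$ in the localization. A direct monomial computation shows
\[
T^{(n)}_{x_j^n} \;=\; A\bigl[\,y_\ell : \ell \neq j\,\bigr]\bigl[x_j^n,\, x_j^{-n}\bigr],
\]
a Laurent polynomial ring over $A$ in $k$ indeterminates. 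Since $x_i^n \in T^{(n)}$ for every $i$, the inclusion $T^{(n)} \subseteq T$ is integral, so $\dim T^{(n)} = \dim T = k + \dim A$, and hence $\mathrm{height}\ I = d - k$.

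Next I would exhibit the regular sequence. Partition the index set as $\{1,\ldots,d\} = \{i\} \sqcup \{i_\ell : \ell \neq j\} \sqcup S$, where $|S| = d - k$. For each $m \in S$, write $t_m \mapsto x_1^{b_1} \cdots x_k^{b_k}$ with $\sum_\ell b_\ell = n$ and $b_j < n$. The monomial identity
\[
x_1^{b_1} \cdots x_k^{b_k} \;=\; x_j^n \prod_{\ell \neq j} y_\ell^{b_\ell} \;=\; t_i^{\,b_j - n + 1} \prod_{\ell \neq j} t_{i_\ell}^{\,b_\ell}
\]
holds in $T^{(n)}_{x_j^n}$, so the element
\[
g_m \;:=\; t_m \;-\; t_i^{\,b_j - n + 1} \prod_{\ell \neq j} t_{i_\ell}^{\,b_\ell}
\]
lies in $IR_{t_i}$. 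Viewing $R_{t_i} = A\bigl[t_i^{\pm 1},\, t_{i_1}, \ldots, t_{i_{k-1}}\bigr]\bigl[\,t_m : m \in S\,\bigr]$ as a polynomial ring in the $t_m$'s over a Laurent polynomial ring, each $g_m$ is monic linear in $t_m$ with coefficient in the base, and no $g_m$ involves any other $t_{m'}$ with $m' \in S$. Therefore $\{g_m\}_{m \in S}$ is a regular sequence in $R_{t_i}$, and
\[
R_{t_i}/(g_m : m \in S) \;\cong\; A\bigl[t_i^{\pm 1},\, t_{i_1}, \ldots, t_{i_{k-1}}\bigr].
\]

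To close the loop, the composition $A[t_i^{\pm 1}, t_{i_\ell}] \cong R_{t_i}/(g_m) \onto R_{t_i}/IR_{t_i} = T^{(n)}_{x_j^n}$ sends $t_i \mapsto x_j^n$ and $t_{i_\ell} \mapsto x_j^n \cdot y_\ell$. Under the description of the target from the first step, this map is visibly invertible (send $x_j^n \mapsto t_i$ and $y_\ell \mapsto t_{i_\ell}/t_i$), so the surjection is an isomorphism and $IR_{t_i} = (g_m : m \in S)$, a regular sequence of length $d - k = \mathrm{height}\ I$.

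The main obstacle I anticipate is cleanly separating the two roles of $t_i$: it is both inverted (so $R_{t_i}$ is a localization rather than a genuine polynomial ring) and serves as one of the $k$ ``distinguished'' variables coordinatizing $T^{(n)}_{x_j^n}$. The hypothesis $t_i \mapsto x_j^n$ is precisely what makes each $y_\ell$ expressible as the ratio $t_{i_\ell}/t_i$ of two specific generators, and this is the crucial ingredient behind the triangular description of $IR_{t_i}$.
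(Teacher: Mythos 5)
Your proof is correct and takes essentially the same approach as the paper: both exhibit the same explicit triangular generators $g_m = t_m - (\text{monomial in } t_i^{\pm 1} \text{ and the } t_{i_\ell})$ for $IR_{t_i}$ and identify $R_{t_i}/(g_m : m \in S)$ with a Laurent polynomial ring over $A$. The only cosmetic difference is in the closing step: the paper concludes $J = IR_{t_i}$ by noting both are prime of the same height $d-k$, while you instead construct an explicit inverse to the induced surjection $R_{t_i}/(g_m) \twoheadrightarrow R_{t_i}/IR_{t_i}$.
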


\begin{proof}
By symmetry, it is enough to consider $t_1 \mapsto x_1^n$. We claim that the ideal $IR_{t_1}$ is generated by the regular sequence \[t_{k+1} - t_2 ^2/t_1,\ t_{k+2} - t_2t_3/t_1,\ t_{k+3} - t_2t_4/t_1,\ \ldots, t_{k+1 \choose 2} - t_k ^2/t_1,\ t_{{k+1 \choose 2}+1}-t_2 ^3/t_1^2, \ldots \]
\[\ldots,\ t_{k+2 \choose 3} - t_k^3/t_1^2\ , \ldots ,\ t_{d-1} - t_{k-1}t_2^{n-1}/t_1^{n-1},\ t_d - t_k^n/t_1^{n-1}.\]
Note that the length of this regular sequence is equal to $\mathrm{height}\ I$. Let $J$ be the ideal \[(t_{k+1} -\alpha _{k+1},\ t_{k+2} -\alpha_{k+2},\ \ldots,\ t_d - \alpha_d)R_{t_1},\] where $\alpha_{k+1},\alpha_{k+2}, \ldots \alpha_d$ are as above, i.e., $\alpha_{k+1} = t_2^2/t_1,\ \alpha_{k+2} = t_2t_3/t_1,\ \ldots,\text{ and } \alpha_d = t_k^n/t_1^{n-1}$. We claim that $J = IR_{t_1}$. It is clear that the ideal $J$ is contained in $IR_{t_1}$. Since $(R/I)_{t_1}$ is a subring of the fraction field of $R/I$, it follows that the ideal $IR_{t_1}$ is prime of height $d-k$.

Define a ring homomorphism $\phi \colon R_{t_1} \rightarrow A[t_1, \ldots, t_k][\frac{1}{t_1}]$ such that $t_i \mapsto t_i$ for $1 \leq i \leq k$ and $t_j \mapsto \alpha _j$ for $k+1 \leq j \leq d$. Then the map $\phi$ is a surjective ring homomorphism with kernel $J$. Hence, $J$ is a prime ideal of $R_{t_1}$ of height $d-k$. Thus, $J \subseteq IR_{t_1}$ are prime ideals of the same height in the ring $R_{t_1}$. We conclude that the ideals $J$ and $IR_{t_1}$ are equal.
\end{proof}

\begin{remark}
In the notation of Proposition~\ref{ci}, assume that the ring $A$ is regular. Then for each $t_i$ with $t_i \mapsto x_j^n$, the ring \[(R/I)_{t_i} \cong T_{x_j ^n} ^{(n)} = A[x_j ^n\ , 1/x_j ^n\ , x_2/x_j\ , \ldots, x_{j-1}/x_j\ , x_{j+1}/x_j\ , \ldots x_k/x_j]\] is regular.
\end{remark}

One of the most well-known vanishing results for local cohomology modules in positive characteristic was given by Peskine and Szpiro:
\begin{theorem} \cite[Proposition \RN{3}.4.1]{PS} \label{peskine}
Let $R$ be a regular domain of positive characteristic $p$ and $I$ be an ideal of $R$ such that $R/I$ is a Cohen-Macaulay ring. Then \[H^i_I(R) = 0  \quad \text{for } i \neq \mathrm{height}\ I. \]
\end{theorem}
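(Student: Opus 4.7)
The plan is to prove the vanishing separately in the ranges $i<h$ and $i>h$, where $h=\mathrm{height}\,I$, using very different techniques.

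For $i<h$: since $R$ is regular, it is Cohen-Macaulay, so the grade of $I$ on $R$ equals the height of $I$. Grothendieck's standard vanishing theorem then yields $H^i_I(R)=0$ for $i<\mathrm{grade}(I,R)=h$; note that this half uses neither the positive-characteristic hypothesis nor the Cohen-Macaulayness of $R/I$.

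For $i>h$: the key identity is
\[
H^i_I(R)\;=\;\varinjlim_e\,\mathrm{Ext}^i_R\bigl(R/I^{[p^e]},R\bigr),
\]
valid because the Frobenius powers $I^{[p^e]}$ are cofinal with the ordinary powers of $I$ (they share the same radical). I would show that each term in the direct system vanishes for $i>h$, uniformly in $e$. Two ingredients enter. First, Kunz's theorem says the Frobenius $F\colon R\to R$ is flat whenever $R$ is regular of characteristic $p$, so the Peskine--Szpiro functor $F^e(-)=R\otimes_{R,F^e}(-)$ is exact. Second, because $R/I$ is Cohen-Macaulay, I would establish $\mathrm{pd}_R(R/I)=h$: locally, Auslander-Buchsbaum gives $\mathrm{pd}_{R_{\mathfrak{p}}}((R/I)_{\mathfrak{p}})=\mathrm{depth}(R_{\mathfrak{p}})-\mathrm{depth}((R/I)_{\mathfrak{p}})$, and the unmixedness of the CM quotient $R/I$ forces every prime $\mathfrak{p}\supseteq I$ to contain a minimal prime of $I$ of height $h$, which together with catenarity of regular rings makes the difference exactly $\mathrm{height}(IR_{\mathfrak{p}})=h$. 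Applying the exact functor $F^e$ to a finite free resolution
\[
0\to P_h\to\cdots\to P_0\to R/I\to 0
\]
of length $h$ produces a finite free resolution of $F^e(R/I)=R/I^{[p^e]}$ of length at most $h$. Hence $\mathrm{Ext}^i_R(R/I^{[p^e]},R)=0$ for $i>h$ and every $e$, and taking the direct limit gives $H^i_I(R)=0$ for $i>h$.

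The main obstacle is the global projective-dimension computation $\mathrm{pd}_R(R/I)=h$, which requires combining local Auslander-Buchsbaum with the unmixedness of CM quotients so that $\mathrm{height}(IR_{\mathfrak{p}})$ remains equal to $h$ at every prime containing $I$; without this one only obtains a pointwise, not uniform, bound on projective dimension. The secondary verification---that $F^e$ applied to a free resolution of $R/I$ is a free resolution of $R/I^{[p^e]}$---is immediate from Kunz's flatness, and the identification $F^e(R/I)=R/I^{[p^e]}$ is formal from the definition of the Frobenius functor.
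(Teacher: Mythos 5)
Your argument is the classical Peskine--Szpiro proof, and it matches exactly what the paper points to: the paper cites the result without reproving it, remarking only that the proof ``uses the flatness of the Frobenius action on $R$.'' Your two halves --- $i<h$ from $\mathrm{grade}\,I=\mathrm{height}\,I$ in the regular ring $R$, and $i>h$ from $H^i_I(R)=\varinjlim_e\mathrm{Ext}^i_R(R/I^{[p^e]},R)$ together with Kunz's theorem transporting a length-$h$ free resolution of $R/I$ to one of $R/I^{[p^e]}$ --- are precisely the standard argument.

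One caveat on the step you rightly single out as the main obstacle. For non-local $R$, Cohen--Macaulayness of $R/I$ gives equidimensionality only \emph{locally}; it does not force every minimal prime of $I$ to have height $h$. For instance, in $R=\mathbb{F}_p[x,y,z]$ take $I=(x,y)\cap(x-1)$: then $R/I\cong\mathbb{F}_p[z]\times\mathbb{F}_p[y,z]$ is Cohen--Macaulay and $\mathrm{height}\,I=1$, yet a Mayer--Vietoris argument (the two ideals are comaximal) gives $H^2_I(R)\cong H^2_{(x,y)}(R)\neq 0$. So your claim that ``every prime $\mathfrak{p}\supseteq I$ contains a minimal prime of $I$ of height $h$'' --- and indeed the theorem as literally stated --- requires $V(I)$ to be equidimensional, as it automatically is in the original local formulation of Peskine--Szpiro. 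In the paper's application $I$ is a prime ideal of a polynomial ring, so $\mathrm{height}(IR_{\mathfrak{p}})=h$ at every $\mathfrak{p}\supseteq I$ and your computation $\mathrm{pd}_R(R/I)=h$ goes through verbatim.
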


The proof uses the flatness of the Frobenius action on $R$ which characterizes regular rings in positive characteristic.

Before we proceed to our main result, we would like to remark that the cohomological dimension of ideals may depend on the coefficient ring:

\begin{remark}
Let $k$ be a field. Let $R = \mathbb{Z}[u,v,w,x,y,z]$ and $R_k = R \otimes _{\mathbb{Z}}k$. Let $I$ be the ideal $(\Delta_1, \Delta_2, \Delta_3)R$ where $\Delta_1 = vz-wy$, $\Delta_2 = wx-uz$, and $\Delta_3 = uy-vx$. It is easily checked that $\mathrm{height}\ I = 2$. Then $\mathrm{cd}_{R/pR}(I, R/pR) =~2$ by Theorem~\ref{peskine}. However, Hochster observed that $H^3_I(R_{\mathbb{Q}})$ is nonzero, i.e., $\mathrm{cd}_{R_{\mathbb{Q}}}(I, R_{\mathbb{Q}}) =~3$. Since local cohomology commutes with localization, we also have $H^3_I(R)$ is nonzero, i.e., $\mathrm{cd}_{R}(I, R) = 3$. We point the reader to \cite[Example 21.31]{twentyfour} for further details.
\end{remark}

In Theorem~\ref{main}, we obtain a vanishing result for local cohomology modules over the integers similar to Theorem~\ref{peskine}.

\begin{proof} [Proof of Theorem \ref{main}]
Let $h$ denote the height of the ideal $I$. Since $R$ is regular, the grade of $I$ equals $h$ so that $H^i_I(R) = 0$ for $i <h$. Further, by Grothendieck's nonvanishing theorem, $H^h_I (R) \neq 0$.

Let $p$ be a nonzero prime integer. The short exact sequence \[0 \rightarrow R \overset{.p} \rightarrow R \rightarrow R/pR \rightarrow 0\] induces \[\cdots \rightarrow H^i_I(R) \rightarrow H^i_I(R/pR) \overset{\delta} \rightarrow H^{i+1} _I (R) \overset{.p} \rightarrow H^{i+1} _I (R) \rightarrow \cdots .\] Note that the height of the ideal $IR/pR$ is also $h$. Hence, by Theorem~\ref{peskine} \[H^i _I(R/pR) = H^i _{IR/pR}(R/pR) = 0 \text{\; for \;} i \neq h. \] It follows that the map $H^i_I(R)\overset{.p} \rightarrow H^i_I(R)$ is an isomorphism for each $i > h+1$ and that the map $H^{h+1}_I(R) \overset{.p} \rightarrow H^{h+1}_I(R)$ is surjective. The crucial part that remains to show is that the map $H^{h+1}_I(R) \overset{.p} \rightarrow H^{h+1}_I(R)$ is also injective. For this, we appeal to Lemma ~\ref{lc-inj}. After reordering of indices, let $t_1, \ldots, t_k$ denote the preimages of $x_1^n, \ldots, x_k^n$ respectively.

The ring $R/(I+pR)$ is a direct summand of the polynomial ring $T/pT$. Therefore, $[H^{n-h} _{(t_1, \ldots, t_k)} (R/(I+pR))]_0$ is zero.

By symmetry, it is enough to show that the multiplication by $p$ map \[H^{h+1}_I (R)_{t_1} \overset{.p} \rightarrow H^{h+1}_I (R)_{t_1}\] is injective. Note that the $R$-module $H^{h+1}_I (R)_{t_1}$ is isomorphic to $H^{h+1}_{IR_{t_1}}(R_{t_1})$. Applying Proposition ~\ref{ci} with $A = \mathbb{Z}$, we get that the ideal $IR_{t_1}$ is generated by a regular sequence of length $h$. Therefore, $H^{h+1}_{IR_{t_1}}(R_{t_1}) = 0$ and thus the map $H^{h+1}_I(R) \overset{.p} \rightarrow H^{h+1}_I(R)$ is injective.

For $i >h$, by \cite[Example 4.6]{ogus}, the module $H^i_I(R) \otimes_{\mathbb{Z}} \mathbb{Q}$ vanishes so that $H^i_I(R)$ is equal to its $\mathbb{Z}$-torsion submodule. But the $\mathbb{Z}$-torsion submodule of $H^i_I(R)$ is zero since multiplication by each nonzero prime integer is injective. We therefore conclude that the local cohomology modules $H^i_I(R)$ vanish for $i> h$.
\end{proof}

\begin{remark}
Following the notation of Theorem~\ref{main}, all but finitely many prime integers are known to be nonzerodivisors on $H^i_I(R)$ for any $i$ by \cite[Theorem 3.1 (2)]{bblsz}. Note that in Theorem~\ref{main}, we proved that \textit{each} nonzero prime integer is a nonzerodivisor on $H^i_I(R)$ for every $i$. Consequently, any associated prime of the $R$-module $H^h_I(R)$ contracts to the zero ideal in the integers.

In \cite[Section 4]{singh}, Singh constructs an example of a local cohomology module over a six dimensional hypersurface, which has $p$-torsion elements for \emph{each} prime integer $p$, and consequently has infinitely many associated prime ideals.
\end{remark}

In \cite[Theorem 4.1]{raicu}, Raicu recovers the result due to Ogus in \cite[Example 4.6]{ogus} which we used in proving Theorem \ref{main}; and also determines the $\mathcal{D}$-module structure of the only nonvanishing local cohomology module.

Finally, we extend Theorem~\ref{main} to standard graded polynomial rings with coefficients from any commutative Noetherian ring. For this, we use the following proposition which is proved in \cite{BV} more generally when $R = \mathbb{Z}[t_1, \ldots, t_d]/J$ is a faithfully flat $\mathbb{Z}$-algebra.

\begin{proposition} \cite[Proposition 3.14]{BV} \label{bv}
Let $I$ be an ideal of the polynomial ring $R = \mathbb{Z}[t_1, \ldots, t_d]$ and $A$ be a ring. If there exists an integer $h$ such that $\mathrm{grade}\ I(R\otimes _{\mathbb{Z}}k) = h$ for every field $k$, then $\mathrm{grade }\ I(R\otimes _{\mathbb{Z}}A) = h$. Analogous statements hold for height.
\end{proposition}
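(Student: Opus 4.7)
The plan is to split the proof into a height statement and a grade statement, leveraging two features of the setup: $R = \mathbb{Z}[t_1, \ldots, t_d]$ is $\mathbb{Z}$-free, and $\mathbb{Z}$ has global dimension one, so the universal coefficient theorem takes a particularly clean form.

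For heights, the extension $A \hookrightarrow R \otimes_{\mathbb{Z}} A = A[t_1, \ldots, t_d]$ is a flat polynomial extension with regular fibers. If $\mathfrak{q}$ is a minimal prime of $I(R\otimes_{\mathbb{Z}}A)$ and $\mathfrak{p} = \mathfrak{q} \cap A$, the dimension formula gives $\mathrm{height}(\mathfrak{q}) = \mathrm{height}(\mathfrak{p}) + \mathrm{height}(\mathfrak{q}/\mathfrak{p}A[t])$, and the second summand is the height of the image of $\mathfrak{q}$ in the fiber $R \otimes_{\mathbb{Z}} k(\mathfrak{p})$, which equals $h$ by hypothesis. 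Minimizing by taking $\mathfrak{p}$ to be a minimal prime of $A$ yields $\mathrm{height}\, I(R\otimes A) = h$.

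For grades, since $R$ is $\mathbb{Z}$-free, the \v{C}ech complex on any generating set of $I$ is a complex of flat $\mathbb{Z}$-modules whose base change to $A$ computes $H^{\bullet}_I(R\otimes A)$. Because $\mathbb{Z}$ has global dimension one, the universal coefficient theorem supplies a short exact sequence
\begin{equation*}
0 \to H^q_I(R) \otimes_{\mathbb{Z}} A \to H^q_I(R\otimes_{\mathbb{Z}}A) \to \mathrm{Tor}^{\mathbb{Z}}_1\bigl(H^{q+1}_I(R),\, A\bigr) \to 0
\end{equation*}
for every ring $A$ and every $q$. Specializing to $A = \mathbb{F}_p$ and $A = \mathbb{Q}$ and invoking the hypothesis $H^i_I(R\otimes k) = 0$ for $i < h$, a diagram chase parallel to the proof of Lemma~\ref{lc-inj} shows that $H^i_I(R) = 0$ for $i < h$ and that $H^h_I(R)$ is $\mathbb{Z}$-torsion-free, hence $\mathbb{Z}$-flat. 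Plugging these back into the displayed sequence gives $H^i_I(R\otimes A) = 0$ for $i < h$, so $\mathrm{grade}\, I(R\otimes A) \geq h$. The reverse inequality comes from the general bound $\mathrm{grade}(J, S) \leq \mathrm{height}(J)$ for a finitely generated ideal $J$ in a Noetherian ring $S$, combined with the height equality just established. The non-Noetherian case reduces to the Noetherian one by writing $A$ as a filtered colimit of its finitely generated $\mathbb{Z}$-subalgebras and using that local cohomology and heights are compatible with such colimits.

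The main technical hurdle is extracting the $\mathbb{Z}$-flatness of $H^h_I(R)$ and the vanishing of $H^i_I(R)$ for $i<h$ from the hypothesis on fields; without these, the $\mathrm{Tor}$ term in the universal coefficient sequence would obstruct the desired vanishing over $R \otimes A$. This is the same circle of ideas that powers Lemma~\ref{lc-inj}, and the generalization to faithfully flat $\mathbb{Z}$-algebras $R$ goes through with only cosmetic changes once one replaces the explicit polynomial identifications by faithful flatness.
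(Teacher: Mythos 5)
The paper does not supply its own proof of this proposition; it is cited verbatim from Bruns--Vetter, so there is no internal argument to compare against. Judged on its own merits, your proof is essentially correct, and the route (dimension formula for the flat extension $A \hookrightarrow A[t_1,\dots,t_d]$ on the height side; universal coefficient sequence for the \v{C}ech complex, specialized at $\mathbb{Q}$ and the $\mathbb{F}_p$'s, on the grade side) is a clean and natural one. Two remarks on precision. First, in the height argument the fiber contribution $\mathrm{height}(\mathfrak{q} k(\mathfrak{p})[t])$ is only $\geq h$ a priori, since the image of a minimal prime $\mathfrak{q}$ of $I A[t]$ need not be minimal over $I$ in the fiber; the lower bound $\mathrm{height}\, IA[t] \geq h$ still follows, and for the matching upper bound one should explicitly take a minimal prime $\mathfrak{p}$ of $A$ together with a height-$h$ minimal prime of $I\, k(\mathfrak{p})[t]$ and pull it back to a prime of $A[t]$ of height exactly $h$. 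Second, invoking ``a diagram chase parallel to the proof of Lemma~\ref{lc-inj}'' oversells the difficulty: once the universal coefficient sequence is in hand, $A=\mathbb{Q}$ gives that $H^i_I(R)$ is torsion for $i<h$, and $A=\mathbb{F}_p$ gives both that $H^i_I(R)$ is $p$-divisible for $i<h$ and that $H^{i+1}_I(R)$ has no $p$-torsion for $i<h$; combining these already yields $H^i_I(R)=0$ for $i<h$ and torsion-freeness (hence $\mathbb{Z}$-flatness) of $H^h_I(R)$, with no appeal to the $\mathcal{D}$-module or Frobenius machinery of Lemma~\ref{lc-inj}. Finally, the closing sentence about reducing the non-Noetherian case to the Noetherian one is superfluous here, since the paper's blanket hypothesis makes $A$ Noetherian.
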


\begin{theorem}
Let $A$ be a commutative Noetherian ring and $T = A[x_1, \ldots, x_k]$  be a polynomial ring with the $\mathbb{N}$-grading $[T]_0 = A$ and deg $x_i = 1$ for each $i$. Consider a minimal presentation of $T^{(n)}$ as $R/I$. Then \[H^i_I(R) = 0  \quad \text{for } i \neq \mathrm{height}\ I. \]
\end{theorem}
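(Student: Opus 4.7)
The plan is to deduce the theorem from Theorem~\ref{main} (the case $A = \mathbb{Z}$) via base change along $\mathbb{Z} \to A$. Write $R = \mathbb{Z}[t_1, \ldots, t_d]$ and set $h = \mathrm{height}\ I$ in the notation of Theorem~\ref{main}; then the polynomial ring presenting $T^{(n)}$ in the statement at hand is $R \otimes_{\mathbb{Z}} A$, with defining ideal $I(R \otimes_{\mathbb{Z}} A)$.

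The first step is to pin down the height and grade after base change. For any field $k$, the ring $R_k/IR_k \cong T_k^{(n)}$ is a Veronese subring of a polynomial ring over $k$ and hence Cohen--Macaulay; together with the regularity of $R_k$ this gives $\mathrm{grade}\ IR_k = \mathrm{height}\ IR_k = h$, the value being independent of $k$. Proposition~\ref{bv} then lifts this to $\mathrm{grade}\ I(R \otimes_{\mathbb{Z}} A) = \mathrm{height}\ I(R \otimes_{\mathbb{Z}} A) = h$, and the grade statement immediately gives $H^i_{I(R \otimes_{\mathbb{Z}} A)}(R \otimes_{\mathbb{Z}} A) = 0$ for $i < h$.

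The main work is the vanishing for $i > h$. For this I would fix generators $f_1, \ldots, f_m$ of $I$ and study the \v{C}ech complex $\check{C}^\bullet = \check{C}^\bullet(f_1, \ldots, f_m; R)$. This is a bounded complex of flat $\mathbb{Z}$-modules (each term is $R$ or a localization thereof), and by Theorem~\ref{main} its cohomology is $H^h_I(R)$ concentrated in degree $h$. In $D(\mathbb{Z})$, therefore, $\check{C}^\bullet$ is quasi-isomorphic to $H^h_I(R)[-h]$. Because $\check{C}^\bullet$ is termwise flat, $\check{C}^\bullet \otimes_{\mathbb{Z}} A$ computes the derived tensor product; so after tensoring with $A$ and taking cohomology we obtain
\[
H^i_{I(R \otimes_{\mathbb{Z}} A)}(R \otimes_{\mathbb{Z}} A) \cong H^i(\check{C}^\bullet \otimes_{\mathbb{Z}} A) \cong \mathrm{Tor}^{\mathbb{Z}}_{h-i}(H^h_I(R), A).
\]
For $i > h$ the right-hand side is a Tor of negative index, hence zero, giving the desired vanishing.

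The step that requires the most care is the final Tor identification; it is a clean consequence of the fact that a bounded flat complex with cohomology concentrated in a single degree is quasi-isomorphic in the derived category to that cohomology module placed in the appropriate degree. If one prefers to avoid derived categories, the same identification follows from a two-term flat resolution of $H^h_I(R)$ over $\mathbb{Z}$ (possible since $\mathbb{Z}$ has global dimension $1$) combined with a K\"unneth-type spectral sequence for the tensor product $\check{C}^\bullet \otimes_{\mathbb{Z}} A$. The remaining ingredients are Proposition~\ref{bv} for the height and grade accounting and Theorem~\ref{main} for the single-degree concentration of local cohomology over $\mathbb{Z}$.
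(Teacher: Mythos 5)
Your proof is correct, and it follows the same overall strategy as the paper: reduce to the case $A = \mathbb{Z}$ (Theorem~\ref{main}), use Proposition~\ref{bv} together with the Cohen--Macaulayness of Veronese subrings over fields to pin down grade and height over $R \otimes_{\mathbb{Z}} A$ and hence get vanishing below height, then handle $i > h$ by base change from $\mathbb{Z}$. The one place you diverge is in how you establish the vanishing for $i > h$. You pass through the Čech complex explicitly, observe it is termwise $\mathbb{Z}$-flat with cohomology concentrated in degree $h$, and identify $H^i(\check{C}^\bullet \otimes_{\mathbb{Z}} A)$ with $\mathrm{Tor}^{\mathbb{Z}}_{h-i}(H^h_I(R), A)$, which vanishes for $i > h$ since Tor in negative degrees is zero. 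The paper instead invokes the general fact that for a ring $S$ and ideal $J$, $\mathrm{cd}_S(J, M) \le \mathrm{cd}_S(J, S)$ for every $S$-module $M$ (the right-exactness remark after the definition of cohomological dimension): since $R \otimes_{\mathbb{Z}} A$ is a module over $\mathbb{Z}[t_1,\dots,t_d]$ and the Čech complex is the same whether computed over $\mathbb{Z}[t_1,\dots,t_d]$ or $R \otimes_{\mathbb{Z}} A$, the cohomological dimension over $\mathbb{Z}$ from Theorem~\ref{main} bounds the cohomological dimension over $A$. Your argument is a more explicit, derived-category version of that bound, and as a bonus it actually identifies the higher cohomology as a Tor group (which would be useful if one wanted, say, to describe $H^h_{I(R\otimes A)}(R\otimes A)$ itself via the degree-$h$ term $\mathrm{Tor}^{\mathbb{Z}}_0 = H^h_I(R) \otimes_{\mathbb{Z}} A$), whereas the paper's argument is shorter but only gives the vanishing. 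Both are valid; the paper's is leaner, yours is more informative.
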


\begin{proof}
Theorem ~\ref{main} and Proposition ~\ref{bv} together ensure that $\mathrm{height}$ $I$ and $\mathrm{grade}$ $I$ are equal. Therefore, $H^i _I(R) = 0 \text{\;  for \;} i < \text{height }I$. Further, the map $\mathbb{Z} \longrightarrow A$ induces the map $\mathbb{Z}[x_1, \ldots, x_n] \longrightarrow R$ which makes $R$ into a $\mathbb{Z}[x_1, \ldots, x_n]$-module. By the right exactness of the top local cohomology, the cohomological dimension of $I$ in $R$ is at most the cohomological dimension of $I$ in $\mathbb{Z}[x_1, \ldots, x_n]$, which, by Theorem~\ref{main}, equals height $I$.
\end{proof}

\section*{Acknowledgement}
The author would like to thank Anurag Singh for many valuable discussions and for his constant encouragement and support.

\bibliographystyle{alpha}
\bibliography{bibfile.bib}

\newcommand{\etalchar}[1]{$^{#1}$}
\begin{thebibliography}{BBL{\etalchar{+}}14}

\bibitem[Bar95]{Ba}
Margherita Barile.
\newblock Arithmetical ranks of ideals associated to symmetric and alternating
  matrices.
\newblock {\em J. Algebra}, 176(1):59--82, 1995.

\bibitem[BBL{\etalchar{+}}14]{bblsz}
Bhargav Bhatt, Manuel Bickle, Gennady Lyubeznik, Anurag~K. Singh, and Wenliang.
  Zhang.
\newblock Local cohomology modules of a smooth $\mathbb{Z}$-algebra have
  finitely many associated primes.
\newblock {\em Invent. math.}, 197(3):509--519, 2014.

\bibitem[BV88]{BV}
Winfried Bruns and Udo Vetter.
\newblock {\em Determinantal rings}, volume 1327 of {\em Lecture Notes in
  Mathematics}.
\newblock Springer-Verlag, Berlin, 1988.

\bibitem[Har79]{hartshorne}
Robin Hartshorne.
\newblock Complete intersections in characteristic $p>0$.
\newblock {\em Amer. J. Math.}, 101(2):380--383, 1979.

\bibitem[ILL{\etalchar{+}}07]{twentyfour}
Srikanth~B. Iyengar, Graham~J. Leuschke, Anton Leykin, Claudia Miller, Ezra
  Miller, Anurag~K. Singh, and Uli Walther.
\newblock {\em Twenty-four hours of local cohomology}, volume~87 of {\em
  Graduate Studies in Mathematics}.
\newblock American Mathematical Society, Providence, RI, 2007.

\bibitem[LSW16]{LSW}
Gennady Lyubeznik, Anurag~K. Singh, and Uli Walther.
\newblock Local cohomology modules supported at determinantal ideals.
\newblock {\em J. Eur. Math. Soc. (JEMS)}, 18(11):2545--2578, 2016.

\bibitem[Ogu73]{ogus}
Arthur Ogus.
\newblock Local cohomological dimension of algebraic varieties.
\newblock {\em Ann. of Math. (2)}, 98:327--365, 1973.

\bibitem[PS73]{PS}
Christian Peskine and Lucien Szpiro.
\newblock Dimension projective finie et cohomologie locale. {A}pplications \`a
  la d\'{e}monstration de conjectures de {M}. {A}uslander, {H}. {B}ass et {A}.
  {G}rothendieck.
\newblock {\em Inst. Hautes \'{E}tudes Sci. Publ. Math.}, (42):47--119, 1973.

\bibitem[Rai17]{raicu}
Claudiu Raicu.
\newblock Characters of equivariant $\mathcal{D}$-modules on veronese cones.
\newblock {\em Trans. Amer. Math. Soc.}, 369(3):2087--2108, 2017.

\bibitem[Sin00]{singh}
Anurag~K. Singh.
\newblock $p$-torsion elements in local cohomology modules.
\newblock {\em Math. Res. Lett.}, 7(2):165--176, 2000.

\end{thebibliography}

\end{document}